\title[Chern number inequalities]{On the Chern number inequalities satisfied by all smooth complete intersection threefolds with ample canonical class}
\author[Mao Sheng]{Mao Sheng}
\author[Jinxing Xu]{Jinxing Xu}
\author[Mingwei Zhang]{Mingwei Zhang}
\email{msheng@ustc.edu.cn}
\email{xujx02@ustc.edu.cn}
\email{zmw309@mail.ustc.edu.cn}
\address{School of Mathematical Sciences,
University of Science and Technology of China, Hefei, 230026, China}
\begin{document}
%%%%%%%%%%%%%%%%%%%% Text italic %%%%%%%%%%%%%%%%%%%%%%%%%%%%
\theoremstyle{plain}
\newtheorem{thm}{Theorem}[section]
\newtheorem{theorem}[thm]{Theorem}
\newtheorem{lemma}[thm]{Lemma}
\newtheorem{corollary}[thm]{Corollary}
\newtheorem{proposition}[thm]{Proposition}
\newtheorem{addendum}[thm]{Addendum}
\newtheorem{variant}[thm]{Variant}
%%%%%%%%%%%%%%%%%%%% Text roman %%%%%%%%%%%%%%%%%%%%%%%%%%%%%
\theoremstyle{definition}
\newtheorem{construction}[thm]{Construction}
\newtheorem{notations}[thm]{Notations}
\newtheorem{question}[thm]{Question}
\newtheorem{problem}[thm]{Problem}
\newtheorem{remark}[thm]{Remark}
\newtheorem{remarks}[thm]{Remarks}
\newtheorem{definition}[thm]{Definition}
\newtheorem{claim}[thm]{Claim}
\newtheorem{assumption}[thm]{Assumption}
\newtheorem{assumptions}[thm]{Assumptions}
\newtheorem{properties}[thm]{Properties}
\newtheorem{example}[thm]{Example}
\newtheorem{conjecture}[thm]{Conjecture}
\numberwithin{equation}{thm}

% Skriptbuchstaben
\newcommand{\sA}{{\mathcal A}}
\newcommand{\sB}{{\mathcal B}}
\newcommand{\sC}{{\mathcal C}}
\newcommand{\sD}{{\mathcal D}}
\newcommand{\sE}{{\mathcal E}}
\newcommand{\sF}{{\mathcal F}}
\newcommand{\sG}{{\mathcal G}}
\newcommand{\sH}{{\mathcal H}}
\newcommand{\sI}{{\mathcal I}}
\newcommand{\sJ}{{\mathcal J}}
\newcommand{\sK}{{\mathcal K}}
\newcommand{\sL}{{\mathcal L}}
\newcommand{\sM}{{\mathcal M}}
\newcommand{\sN}{{\mathcal N}}
\newcommand{\sO}{{\mathcal O}}
\newcommand{\sP}{{\mathcal P}}
\newcommand{\sQ}{{\mathcal Q}}
\newcommand{\sR}{{\mathcal R}}
\newcommand{\sS}{{\mathcal S}}
\newcommand{\sT}{{\mathcal T}}
\newcommand{\sU}{{\mathcal U}}
\newcommand{\sV}{{\mathcal V}}
\newcommand{\sW}{{\mathcal W}}
\newcommand{\sX}{{\mathcal X}}
\newcommand{\sY}{{\mathcal Y}}
\newcommand{\sZ}{{\mathcal Z}}
% Sonderbuchstaben mit Doppellinie
\newcommand{\A}{{\mathbb A}}
\newcommand{\B}{{\mathbb B}}
\newcommand{\C}{{\mathbb C}}
\newcommand{\D}{{\mathbb D}}
\newcommand{\E}{{\mathbb E}}
\newcommand{\F}{{\mathbb F}}
\newcommand{\G}{{\mathbb G}}
\newcommand{\HH}{{\mathbb H}}
\newcommand{\I}{{\mathbb I}}
\newcommand{\J}{{\mathbb J}}
\renewcommand{\L}{{\mathbb L}}
\newcommand{\M}{{\mathbb M}}
\newcommand{\N}{{\mathbb N}}
\renewcommand{\P}{{\mathbb P}}
\newcommand{\Q}{{\mathbb Q}}
\newcommand{\R}{{\mathbb R}}
\newcommand{\SSS}{{\mathbb S}}
\newcommand{\T}{{\mathbb T}}
\newcommand{\U}{{\mathbb U}}
\newcommand{\V}{{\mathbb V}}
\newcommand{\W}{{\mathbb W}}
\newcommand{\X}{{\mathbb X}}
\newcommand{\Y}{{\mathbb Y}}
\newcommand{\Z}{{\mathbb Z}}
\newcommand{\id}{{\rm id}}
\newcommand{\rank}{{\rm rank}}
\newcommand{\END}{{\mathbb E}{\rm nd}}
\newcommand{\End}{{\rm End}}
\newcommand{\Hom}{{\rm Hom}}
\newcommand{\Hg}{{\rm Hg}}
\newcommand{\tr}{{\rm tr}}
\newcommand{\Sl}{{\rm Sl}}
\newcommand{\Gl}{{\rm Gl}}
\newcommand{\Cor}{{\rm Cor}}

\def\upcurname{curve1}%凸包的上半部分
\def\belowcurname{curve2}%凸包的下半部分
\def\areaname{\tiny Chang's Region R\small}%张的区域

\def\coll{black}%凸包的直线的颜色
\def\colc{black}%强调的点的颜色
\def\colf{white}%区域的颜色

%%%%%%%%%%%%%%%%%%%%%%%%%%%%%%%%%%%%%%%%%%%%%%%%%%%%%%%%

\maketitle

\begin{abstract}
We obtain all linear Chern number inequalities satisfied by any smooth complete intersection threefold with ample canonical class.
\end{abstract}

\ \ \ \ \ \ \ \ \ {\footnotesize \textit{Keywords}: Complete intersection threefolds, Chern number inequalities}

\ \ \ \ \ \ \ \ \ {\footnotesize \textmd{Mathematics Subject Classification 2000}: 14J30}

\section{Introduction}

This small note is motivated by finding a new Chern number inequality for a smooth projective threefold $X$ with ample canonical bundle. Let $c_i=c_i(T_{X})$ be its Chern class for $i=1,2,3$. Yau's famous inequality \cite{Yau} in the three dimensional case says that
$$
8c_1c_2 \leq 3c_1^3,
$$
with equality iff $X$ is uniformized by the complex ball. As it contains no $c_3$ term, one may naturally wonder whether there exists a Chern number inequality involving $c_3$. This is possible because of the following result:
\begin{theorem}[Chang-Lopez, Corollary 1.3 \cite{Chang-Lopez}]
The region described by the Chern ratios $(\frac{c_1^3}{c_1c_2},\frac{c_3}{c_1c_2})$ of smooth irreducible threefolds with ample canonical bundle is bounded.
\end{theorem}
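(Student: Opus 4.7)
Since $c_1c_2=-K_X\cdot c_2(X)$ and Miyaoka's inequality gives $K_X\cdot c_2(X)\geq\tfrac{1}{6}K_X^3>0$, the denominator $c_1c_2$ is nonzero of definite sign. Yau's inequality already yields $0<c_1^3/(c_1c_2)\leq 8/3$, so the first coordinate is automatically bounded. The substance of the theorem is a universal linear estimate of the form
\[
|c_3(X)|\leq A\cdot K_X\cdot c_2(X)+B\cdot K_X^3
\]
with constants $A,B$ independent of $X$, from which boundedness of the second ratio is immediate.

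The plan is to cut $X$ by a smooth surface $S\in|mK_X|$, available by Bertini for any fixed $m$ large enough that $mK_X$ is very ample. Adjunction yields $K_S=(m+1)K_X|_S$ ample, and the conormal sequence writes both $K_S^2=m(m+1)^2K_X^3$ and $c_2(S)=m\cdot K_X\cdot c_2(X)+m^2(m+1)K_X^3$ as explicit linear combinations of $K_X^3$ and $K_X\cdot c_2(X)$. Since $S$ is minimal of general type with $K_S$ ample, Bogomolov-Miyaoka-Yau ($K_S^2\leq 3c_2(S)$), Noether's formula $K_S^2+c_2(S)=12\chi(\sO_S)$, Noether's inequality $p_g(S)\leq\tfrac{1}{2}K_S^2+2$, and the bound $q(S)\leq p_g(S)$ (from $\chi(\sO_S)\geq 1$) jointly bound every Hodge number of $S$ linearly in $K_X^3$ and $K_X\cdot c_2(X)$.

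The Lefschetz hyperplane theorem transports these bounds to $X$: $h^{p,q}(X)=h^{p,q}(S)$ for $p+q<2$ and $h^{p,q}(X)\leq h^{p,q}(S)$ for $p+q=2$, so $h^{0,1}(X),\ h^{0,2}(X),\ h^{1,1}(X)$ are controlled. The plurigenus $p_g(X)$ is computed by Riemann-Roch $\chi(K_X)=-K_X\cdot c_2(X)/24$ together with Serre duality $h^i(K_X)=h^{3-i}(\sO_X)$. The only Hodge number of $X$ not yet bounded is $h^{1,2}$. Hirzebruch-Riemann-Roch gives $\chi(\Omega_X^1)=\tfrac{1}{24}(c_1c_2-12c_3)$, which via Hodge symmetry equals $h^{0,1}-h^{1,1}+h^{1,2}-h^{0,2}$; using $h^{1,2}\geq 0$ this already bounds $c_3$ from above. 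For the reverse direction one takes a Lefschetz pencil $\{S_t\}\subset|mK_X|$, blows up the smooth base locus $B=S\cap S'$, and applies Deligne's vanishing-cycle analysis to the resulting Lefschetz fibration over $\P^1$: the invariant part of $H^2(S_{\rm gen})$ under monodromy controls $b_3(\tilde X)$, hence $b_3(X)=b_3(\tilde X)-b_1(B)$ and thus $h^{1,2}(X)$, linearly in Betti numbers of $S$ and the node count $N=\int_X c_3(P^1(mK_X))$, which is itself explicit in $K_X^3,\ K_X\cdot c_2(X)$.

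The main obstacle is the two-sided control of $h^{1,2}(X)$: being a middle Hodge number it is immune to direct vanishing theorems, and one must supplement Hirzebruch-Riemann-Roch for $\Omega_X^1$ with the Lefschetz-pencil monodromy analysis above to close both sides of the estimate; the upper bound on $c_3$ from the trivial inequality $h^{1,2}\geq 0$ is elementary, but the matching lower bound is where the geometric content of the theorem resides.
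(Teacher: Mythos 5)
This statement is not proved in the paper at all: it is quoted verbatim as Corollary 1.3 of Chang--Lopez \cite{Chang-Lopez}, and the authors explicitly decline to reproduce (or use) its proof. So there is no in-paper argument to compare against; what follows is an assessment of your sketch on its own terms. The overall architecture is sensible and close to how results of this type are actually proved: reduce to bounding the Hodge numbers of $X$, control everything except $h^{1,2}$ by cutting with a pluricanonical surface $S\in|mK_X|$ and invoking surface geography plus Lefschetz, and isolate the upper bound on $h^{1,2}(X)$ (equivalently on $b_3(X)$) as the real difficulty. The formulas you quote ($K_S^2=m(m+1)^2K_X^3$, $c_2(S)=mK_X\cdot c_2(X)+m^2(m+1)K_X^3$, $\chi(\Omega^1_X)=\tfrac{1}{24}(c_1c_2-12c_3)$) are correct, and the first coordinate is indeed disposed of by Yau's inequality together with $c_1^3<0$.

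There are, however, two genuine gaps. First, the constants $A,B$ must be independent of $X$, so the integer $m$ with $mK_X$ very ample must be chosen uniformly over the whole class; this requires an effective very-ampleness (or at least base-point-freeness) statement for pluricanonical systems of threefolds with ample $K_X$, which you invoke implicitly but which is a nontrivial input. Second, and more seriously, the Lefschetz-pencil step is only asserted, and the naive version of it fails quantitatively: the obvious estimate for the middle graded piece of $H^3(\tilde X)$ is $h^1(\P^1,R^2f_*\Q)\lesssim N\cdot b_2(S_{\mathrm{gen}})$, and since both $N$ and $b_2(S)$ are linear in $K_X^3$ and $K_X\cdot c_2$, this is a \emph{quadratic} bound on $b_3(X)$ --- which bounds nothing about the ratio $c_3/(c_1c_2)$. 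To get the linear bound that the theorem requires (and that gives the Chang--Lopez paper its title) one must use the Picard--Lefschetz description of the local monodromy at each nodal fiber: each local monodromy operator is a single transvection, so the local invariants have codimension at most one, and the Euler-characteristic computation on $\P^1$ then yields $h^1(\P^1,j_*V)\leq N+O(b_2(S))$ rather than $N\cdot b_2(S)$. Relatedly, the node count $N=\int_Xc_3(P^1(mK_X))$ itself contains a $c_3(X)$ term, so the final bookkeeping must be checked to close rather than circle; it does, but only after the refined monodromy estimate is in place. As written, the proposal is a credible plan rather than a proof, with precisely the step you yourself identify as the ``geometric content'' left unexecuted.
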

However, the result, as well as its proof, does not produce a new Chern number inequality, even for the subclass of smooth complete intersections. Before the discovery of a new method to handle the general case, it is valuable from a scientific standpoint to treat this subclass first by bare hands. This is what we are going to do here.\\

Our method is to determine the convex hull in $\R^2$ generated by Chern ratios $(\frac{c_1^3}{c_1c_2},\frac{c_3}{c_1c_2})$ of all smooth complete intersection threefolds with ample canonical class. Let $n$ be a natural number. A smooth complete intersection (abbreviated as SCI) threefold $X$ in $\P^{n+3}$ is cut out by $n$ hypersurfaces, and a nondegenerate one, i.e., not contained in a hyperplane, by $n$ hypersurfaces of degrees $d_1+1,\cdots,d_n+1$ with $d_i\geq 1$ for $1\leq i\leq n$. The Chern numbers for a smooth $X$ is uniquely determined by the tuple $(d_1,\cdots,d_n)$. Therefore we may use the notation
$$
Q(n;d_1,\cdots,d_n)=(\frac{c_1^3}{c_1c_2}, \frac{c_3}{c_1c_2})\in \mathbb{R}^2
$$
for Chern ratios of $X$. Note that $X$ has ample canonical class if and only if $\sum_{i=1}^{n}d_i\geq 5$. Put
$$
Q=\{Q(n;d_1,\cdots,d_n)| n\geq1, d_i\geq 1, \sum_{i=1}^{n}d_i\geq 5\}\subset \mathbb{R}^2.
$$
Let $P$ be the convex hull of $Q$.
\begin{thm}\label{main thm}
$P$ is a rational polyhedra with infinitely many faces. The corners of $P$ are given by the following points:
$$
\begin{array}{ccc}
  &Q(1;5)=(\frac{1}{16},\frac{43}{8}), &Q(2;2,3)=(\frac{1}{10},\frac{19}{5}), \\ & Q(3;2,3,3)=(\frac{1}{8},\frac{13}{4}),
   & Q(3;2,2,2)=(\frac{1}{3}, \frac{23}{12}),
\end{array}
$$
and $Q(n;1,\cdots,1)=(\frac{2 (-4 + n)^2}{12 - 5 n + n^2}, \frac{-24 + 14 n - 3 n^2 + n^3}{3 (-4 + n) (12 - 5 n + n^2)}) $ with $n\geq 5$.
\end{thm}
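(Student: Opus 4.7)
The strategy is to compute $Q(n;d_1,\ldots,d_n)$ as an explicit symmetric rational function, identify the supporting lines of $P$ through consecutive corners, and verify that every admissible tuple lies in the correct half-plane of each such line. From the normal bundle sequence
\begin{equation*}
0\longrightarrow T_X\longrightarrow T_{\P^{n+3}}|_X\longrightarrow \bigoplus_{i=1}^{n}\sO_X(d_i+1)\longrightarrow 0
\end{equation*}
one obtains
\begin{equation*}
c(T_X)=\frac{(1+h)^{n+4}}{\prod_{i=1}^{n}(1+(d_i+1)h)},\qquad \int_X h^3=\prod_{i=1}^{n}(d_i+1),
\end{equation*}
whose expansion to order $h^3$ gives $c_1^3$, $c_1c_2$, $c_3$ as polynomials in the elementary symmetric functions of the $d_i+1$. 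Substituting the tuples of the statement then confirms the claimed coordinates of each corner.

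Next, I would work out the adjacency structure of the proposed corners. A direct calculation shows that, in the order $Q(1;5)\to Q(2;2,3)\to Q(3;2,3,3)\to Q(5;1,\ldots,1)\to Q(3;2,2,2)\to Q(6;1,\ldots,1)\to Q(7;1,\ldots,1)\to\cdots$, the slopes of successive edges increase monotonically from $-42$ toward the limiting slope $-\tfrac{1}{3}$ of the accumulation curve $Q(n;1,\ldots,1)$, whose limit point is $(2,\tfrac{1}{3})$. Since the listed corners are supposed to exhaust the extreme points, the remaining piece of $\partial P$ must be a single upper edge, and the natural candidate is the supporting line through $Q(1;5)$ and $(2,\tfrac{1}{3})$, namely $242x+93y=515$. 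Thus $P$ should have exactly one upper facet and infinitely many lower facets along the polygonal chain.

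The bulk of the argument is then the verification of each of these (countably many) linear inequalities on all admissible tuples. After clearing denominators (using that $c_1c_2<0$ throughout our region, since $c_1<0$ and $c_2>0$), each inequality takes the form $F(m;d_1,\ldots,d_m)\ge 0$ for a symmetric polynomial $F$. I would exploit two reductions: (i) by symmetry, assume $d_1\le\cdots\le d_m$; and (ii) treating the $d_i$ as real variables on $[1,\infty)$, monotonicity/convexity of $F$ in each $d_i$ localizes its minimum at lattice boundary configurations — typically all $d_i=1$, or a single $d_i$ large with the rest equal to $1$. These reductions drive each edge inequality down to a short list of one-parameter cases checkable by direct algebra; side computations showing, for instance, that $Q(4;1,1,1,2)$ lies exactly on the segment from $Q(3;2,3,3)$ to $Q(5;1,\ldots,1)$ and $Q(4;1,1,2,2)$ lies exactly on the segment from $Q(3;2,2,2)$ to $Q(6;1,\ldots,1)$, suggest that the extremal configurations admit a clean combinatorial description.

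The main obstacle will be the uniformity required across the infinite family of lower edges. One must show that for each admissible tuple $(m;d_1,\ldots,d_m)$ there is a canonical $n=n(m;d_1,\ldots,d_m)$, roughly controlled by $\sum_{i}d_i$ relative to $m$, such that the line through $Q(n;1,\ldots,1)$ and $Q(n+1;1,\ldots,1)$ is the binding inequality; all other lower-chain inequalities should then follow automatically from a convexity estimate on the parameterized family of line coefficients. Once this canonical $n$ is pinned down, the remaining single-$n$ inequality reduces to a finite polynomial check handled by the two reductions above. Finally, that each listed point is an honest extreme point of $P$ (not in the relative interior of an adjacent edge) is immediate from the strict monotonicity of the slope sequence computed in step two.
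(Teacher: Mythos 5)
Your global picture of $\partial P$ (the listed corners, a single upper edge through $Q(1;5)$ and the accumulation point $(2,\tfrac13)$, and an infinite monotone chain of lower edges limiting to slope $-\tfrac13$) agrees with the paper, and your corner computations and the collinearity checks such as $Q(4;1,1,1,2)\in p_3p_4$ are correct. But the central reduction in your step (ii) has a genuine gap. The functional $F=kc_1^3+bc_1c_2-c_3$ is a \emph{cubic} polynomial in each $d_i$ (through $s_3$ and $s_1^3$), so it is neither convex nor concave in $d_i$ on $[1,\infty)$, and ``monotonicity/convexity'' does not localize its minimum at the boundary configurations you list. Worse, your proposed short list of extremal configurations (``all $d_i=1$, or a single $d_i$ large with the rest equal to $1$'') provably misses the binding cases: the corners $Q(2;2,3)$, $Q(3;2,3,3)$ and $Q(3;2,2,2)$ are themselves minimizers of the corresponding edge functionals and are of neither type. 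An argument that only checks your list would either miss the configurations where the inequalities are tight or wrongly certify an inequality. The paper's reduction is different and is the key idea you are missing: first fix $s_1=\sum d_i$. Then $c_1$ is constant and the functional becomes \emph{linear} in $(s_2,s_3)$, so its minimum over the simplex $\{d_i\ge 0,\ \sum d_i=s_1\}$ is attained where $k$ of the $d_i$ vanish and the remaining $n-k$ are equal; since the $c_i$ depend only on $s_1,s_2,s_3$, appending zeros changes nothing, and every edge inequality reduces to the one-real-parameter family $c_i(n;d)$ with all degrees equal to a (possibly non-integral) $d\ge 1$, $nd=s_1\in\N$. That is the family on which the paper actually does its estimates.

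Your treatment of the infinite lower chain is also only a plan: you posit a ``canonical binding $n$'' for each tuple and a convexity estimate on the line coefficients, but neither is proved, and the uniformity in $m$ is exactly where the work lies. After its reduction, the paper handles this by showing $\tilde f(m,s_1,d)$ is quadratic in $d$ with positive leading coefficient $s_1/3$, hence minimized at $d=1$ or at its vertex; the vertex value, after clearing denominators, is a cubic $g(m,s_1)$ in $s_1$ whose positivity is certified by checking the sign of $g$ and its first three $s_1$-derivatives at an explicit base point, uniformly for $m\ge 10$, with $m=6,\dots,9$ and the finitely many tuples with $\sum d_i\le 10$ (and, for the segments $p_ip_{i+1}$, $i\le 5$, the finitely many tuples with $x$-coordinate at most that of $p_6$) checked case by case. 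To repair your write-up you would need to replace step (ii) by the fix-$s_1$/linear-in-$(s_2,s_3)$ argument and then carry out an explicit uniform estimate of this kind; as one incidental check while doing so, note that the slope of $L_{p_5p_6}$ through $(\tfrac13,\tfrac{23}{12})$ and $(\tfrac49,\tfrac{14}{9})$ is $-\tfrac{13}{4}$, not the $-\tfrac{13}{5}$ printed in the paper.
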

\begin{remark}
In another work of M.-C. Chang \cite{Chang}, she described a region $R$ in the plane of Chern ratios such that any rational point in $R$ can be realized by a SCI threefold with ample canonical bundle; Outside $R$ there are infinitely many Chern ratios of smooth complete intersection threefolds but no accumulating points. These two results are related, but do not imply each other. See FIGURE \ref{fig1}.
\end{remark}
We proceed to deduce our main application of the above result. According to the values of their $x$-coordinates, we label the corner points of $P$ as follows:
$$
\begin{array}{ccc}
  p_1=Q(1;5), & p_2=Q(2;2,3), & p_3=Q(3;2,3,3),\\
  p_4=Q(5;1,1,1,1,1), &p_5=Q(3;2,2,2), & p_n=Q(n;1,\cdots,1), n\geq 6.
\end{array}
$$
The sequence of points $\{p_n\}$ converges to the point
$$
p_{\infty}=(2,\frac{1}{3}).
$$

The closure of $P$, denoted by $\bar{P}$, contains the points $p_{n} (n\geq 1), p_{\infty}$ as its corners.

For two distinct points $p,q\in \R^2$, denote the line through $p,q$ by $L_{pq}$, and the line segment connecting $p,q$ by $pq$. Denote the expressions  of lines as follows:
\begin{equation}\notag
\begin{split}
 L_{p_1p_{\infty}}:& \ y=k_0x+b_0,\\
L_{p_mp_{m+1}}: & \ y=k_mx+b_m, m\geq 1.
\end{split}
\end{equation}
The values of $k_m, b_m$ are:

$$
\begin{array}{ccc}
  (k_0,b_0)=(-\frac{242}{93},\frac{515}{93}), &(k_1,b_1)=(-42,8),  &(k_2,b_2)=(-22,6),\\
   (k_3,b_3)=(-14,5), &(k_4,b_4)=(-\frac{9}{2},\frac{41}{12}),&(k_5,b_5)=(-\frac{13}{5},3),
 \end{array}
$$
$$
k_m=\frac{-28 m + m^2 + 4 m^3 - m^4}{(-4 + m) (-3 + m) (-20 - 5 m + 3 m^2)}, \forall \ m\geq 6,
$$
$$
 b_m=\frac{-120 + 254 m + 3 m^2 - 50 m^3 +
 9 m^4}{3 (-4 + m) (-3 + m) (-20 - 5 m + 3 m^2)}, \forall \  m\geq 6.
 $$

 The sequence of lines $L_{p_mp_{m+1}}$ converges to the line
 $$
 L_{\infty}: y=k_{\infty}x+b_{\infty},
 $$
where
$k_{\infty}=-\frac{1}{3}, \ b_{\infty}=1.$

\begin{theorem}\label{main application}
Let $C$ be the convex cone of linear inequalities satisfied by the Chern numbers of each SCI threefold with ample canonical bundle. That is,
\begin{equation}\notag
\begin{split}
C=\{(\lambda_1,\lambda_2,\lambda_3)\in \R^3\mid & \lambda_1c_1^3(X)+\lambda_2c_1(X)c_2(X)+\lambda_3c_3(X)\geq 0,\\
&\textmd{ for any SCI threefold } X \textmd{ with } K_X>0.\}
\end{split}
\end{equation}

Then $C$ is a rational convex cone with edges
$$(-k_0,-b_0,1), \ (k_m,b_m,-1)(m\geq 1), \ (k_{\infty},b_{\infty},-1),$$
where by an edge we mean a one dimensional face.
\end{theorem}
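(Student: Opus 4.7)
The plan is to translate the question into planar convex geometry and read off the extreme rays of $C$ from the boundary of $\bar{P}$ supplied by Theorem~\ref{main thm}. The first step is a sign observation: for any SCI threefold with $K_X$ ample, $c_1^3 = -K_X^3 < 0$, and Yau's inequality $8 c_1 c_2 \leq 3 c_1^3$ recalled in the introduction then forces $c_1 c_2 < 0$. Dividing the defining inequality $\lambda_1 c_1^3 + \lambda_2 c_1 c_2 + \lambda_3 c_3 \geq 0$ by the negative quantity $c_1 c_2$ reverses it into $\lambda_1 x + \lambda_2 + \lambda_3 y \leq 0$, where $(x,y)=Q(n;d_1,\ldots,d_n)$. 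Hence $\lambda\in C$ if and only if the affine functional $f_\lambda(x,y):=\lambda_1 x + \lambda_3 y + \lambda_2$ satisfies $f_\lambda \leq 0$ on $Q$, equivalently on the closure $\bar{P}$. In this form $C$ is, up to the sign convention, the cone of affine inequalities supporting $\bar{P}$, and its one-dimensional faces are in natural bijection with the extreme supporting lines of $\bar{P}$.

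The second step uses Theorem~\ref{main thm} to enumerate these. The boundary of $\bar{P}$ consists of a single upper edge $\overline{p_1 p_\infty}$ along $L_{p_1 p_\infty}$, the infinite sequence of lower edges $\overline{p_m p_{m+1}}$ along $L_{p_m p_{m+1}}$ for $m\geq 1$, and, since $k_m\to k_\infty$, the limit tangent $L_\infty$ supporting $\bar{P}$ at the accumulation corner $p_\infty$. Each such line yields an element of $C$ after the sign flip: $(-k_0,-b_0,1)$ from $L_{p_1 p_\infty}$, $(k_m,b_m,-1)$ from $L_{p_m p_{m+1}}$, and $(k_\infty,b_\infty,-1)$ from $L_\infty$. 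Membership in $C$ is immediate from the supporting-line property, and rationality of each vector is visible from the explicit formulas for $k_m, b_m$.

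The third step is to prove extremality and completeness. For any edge-based ray $\lambda$, a decomposition $\lambda = \mu + \nu$ in $C$ gives $f_\mu + f_\nu = f_\lambda$, which vanishes on the underlying edge; since $f_\mu, f_\nu \leq 0$ on $\bar{P}$, each summand must also vanish on that edge, forcing $\mu, \nu$ to be non-negative multiples of $\lambda$. Conversely, any $\lambda\in C$ corresponds to a supporting line of $\bar{P}$ which either lies along a listed edge (giving a positive multiple of the corresponding listed ray) or touches $\bar{P}$ only at some corner $p_n$; at an interior corner $p_n$ with $2\leq n<\infty$ the normal cone is spanned by the two adjacent edge-rays, so $\lambda$ is a strict positive combination of them and hence not extreme. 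The genuinely delicate point, and the main obstacle, is the accumulation behavior at $p_\infty$: one must verify both that $(k_\infty,b_\infty,-1)$ is extreme and that no further accumulation-type ray arises. This is handled by the same vanishing argument, once one observes that at $p_\infty$ the normal cone is two-dimensional, spanned on one side by the edge direction of $L_{p_1 p_\infty}$ (giving $(-k_0,-b_0,1)$) and on the other by the limit tangent $L_\infty$ (giving $(k_\infty,b_\infty,-1)$); any decomposition $(k_\infty,b_\infty,-1) = \mu + \nu$ in $C$ forces $\mu,\nu$ to support $\bar{P}$ at $p_\infty$, hence to lie in this planar normal cone, and linear independence of its two extreme directions together with the prescribed third coordinate $-1$ forces the $(-k_0,-b_0,1)$-components of $\mu,\nu$ to cancel and therefore vanish, so both are non-negative multiples of $(k_\infty,b_\infty,-1)$. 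This accounts for every extreme ray of $C$ and completes the description.
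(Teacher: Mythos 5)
Your proposal is correct and follows essentially the same route as the paper: both exploit $c_1c_2<0$ (via Yau's inequality and $c_1^3<0$) to identify $C$ with the polar of the cone over $\bar{P}$ and then read off the extreme rays from Theorem~\ref{main thm}. If anything, your treatment of the accumulation ray $(k_{\infty},b_{\infty},-1)$ is more careful than the paper's, which asserts that all listed vectors determine codimension-one faces of $\check{C}$ even though $L_{\infty}$ meets $\bar{P}$ only in the single point $p_{\infty}$; your normal-cone argument at $p_{\infty}$ supplies the justification that step actually needs.
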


\begin{proof}
Let $\check{C}\subset \R^3$ be the closure of the convex cone generated by the set
\begin{equation}\notag
\{(c_1^3(X), c_1(X)c_2(X), c_3(X))\in \R^3\mid
X \textmd{ is a SCI threefold }
\textmd{with } K_X>0.\}
\end{equation}

Note that if $X$ is a SCI threefold with $K_X>0$, then $c_1(X)c_2(X)<0$. Indeed, Yau's inequality gives us $8c_1(X)c_2(X) \leq 3c_1(X)^3$, and the ampleness of canonical bundle implies the inequality $c_1^3(X)<0$.

Since $c_1(X)c_2(X)<0$, it can be easily seen that
$$
\check{C}=\{(\lambda x,\lambda,\lambda y)\mid \lambda\in \R_{\leq 0},(x,y)\in \bar{P}\}.
$$
By definition, $C$ is the dual cone of  $\check{C}$. By Theorem \ref{main thm}, the codimensional one faces of $\check{C}$ are exactly the hyperplanes in $\R^3$ determined by the vectors $(-k_0,-b_0,1)$, $(k_m,b_m,-1)(m\geq 1)$, $(k_{\infty},b_{\infty},-1)$. From the duality of $C$ and $\check{C}$ we get that $(-k_0,-b_0,1)$, $(k_m,b_m,-1)(m\geq 1)$, $(k_{\infty},b_{\infty},-1)$ are exactly the edges (one-dimensional faces) of $C$.
\end{proof}

\begin{corollary}\label{cor}
If $X$ is a SCI threefold with $K_X>0$, then its Chern numbers satisfy the inequality
%\begin{equation}\notag
$86c_1^3\leq c_3<\frac{c_1^3}{6}$
%\end{equation}
, with the equality $86c_1^3= c_3$ holds if and only if $X$ is isomorphic to a degree $6$ hypersurface in $\P^4$.

\end{corollary}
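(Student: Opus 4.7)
The plan is to translate everything into the Chern-ratio coordinates $(x, y) = (c_1^3/c_1c_2,\ c_3/c_1c_2)$ and apply Theorem~\ref{main thm}. Since $c_1 c_2 < 0$ for any SCI threefold with ample canonical class (as noted in the proof of Theorem~\ref{main application}), dividing through by $c_1c_2$ reverses inequalities, and the target double inequality $86c_1^3 \leq c_3 < c_1^3/6$ is equivalent to
\[
\frac{x}{6} < y \leq 86\, x, \qquad (x,y) \in Q.
\]

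For the upper bound I would consider the linear functional $f(x,y) = 86x - y$. A direct evaluation gives $f(p_1) = 86 \cdot \frac{1}{16} - \frac{43}{8} = 0$, while $f(p_j) > 0$ for $j = 2, 3, 4, 5$ and $f(p_\infty) > 0$. For the infinite family $p_n = Q(n;1,\ldots,1)$ with $n \geq 6$, substituting the explicit formula shows that $f(p_n)$ is a rational function of $n$ whose numerator $516(n-4)^3 - (n^3 - 3n^2 + 14n - 24)$ and whose denominator $3(n-4)(n^2-5n+12)$ are both positive for $n \geq 6$ (the numerator dominated by its leading $515 n^3$). Since $\bar{P}$ is the convex hull of its corners and $f$ is linear, $f \geq 0$ on all of $\bar{P}$ with equality only at the single corner $p_1$, which gives $c_3 \geq 86 c_1^3$.

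For the strict lower bound $y > x/6$, the analogous sign check shows the line $y = x/6$ passes through $p_\infty = (2, \frac{1}{3})$ and no other corner of $\bar{P}$. For the family $n \geq 5$ this reduces to the positivity of the polynomial $9n^2 - 34n + 40$, which is immediate from its negative discriminant $34^2 - 1440 < 0$. Convexity then yields $y \geq x/6$ throughout $\bar{P}$ with equality only at $p_\infty$; since $p_\infty$ is a mere limit point of $\{p_n\}$ and is not itself realized by any SCI threefold, every $(x, y) \in Q$ satisfies $y > x/6$ strictly, i.e.\ $c_3 < c_1^3/6$.

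Finally, for the equality characterization, $86c_1^3 = c_3$ forces the Chern ratios of $X$ to equal $p_1 = Q(1;5)$. A case analysis using the closed formulas for $c_1^3$, $c_1c_2$, $c_3$ as polynomial/rational functions of $(n; d_1, \ldots, d_n)$ shows that the only tuple producing $(\frac{1}{16},\frac{43}{8})$ is $(1;5)$, which corresponds to a smooth degree-$6$ hypersurface in $\P^4$. This last step is the main obstacle: the two inequalities reduce cleanly to convex geometry at single extremal corners of $\bar{P}$, whereas identifying the unique SCI tuple realizing $p_1$ requires a genuinely arithmetic verification on the discrete set of admissible $(d_1, \ldots, d_n)$.
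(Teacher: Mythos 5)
Your argument is correct, but it runs in the primal picture where the paper runs in the dual one. The paper's proof invokes Theorem \ref{main application} and certifies each inequality by a single explicit identity, e.g. $\frac{744}{229}(-k_0,-b_0,1)+\frac{515}{229}(k_1,b_1,-1)=(-86,0,1)$, expressing the coefficient vector as a nonnegative combination of two edges of the dual cone $C$; this has the added benefit of localizing the equality case immediately to $L_{p_1p_{\infty}}\cap L_{p_1p_2}=\{p_1\}$ (resp.\ $L_{p_1p_{\infty}}\cap L_{\infty}=\{p_{\infty}\}$, which is not in $Q$, whence strictness). You instead evaluate the linear functionals $86x-y$ and $y-x/6$ at every extreme point of $\bar{P}$ and appeal to convexity (Krein--Milman for the compact set $\bar{P}$); this bypasses Theorem \ref{main application} entirely but costs you a positivity check over the infinite family $p_n$, which you handle correctly (the numerators $516(n-4)^3-(n^3-3n^2+14n-24)$ and $9n^2-34n+40$ are indeed positive in the relevant range, the latter by its negative discriminant). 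The two routes are dual to each other and both rest entirely on Theorem \ref{main thm}. On the final step --- that Chern ratios equal to $p_1$ force $X$ to be a sextic hypersurface in $\P^4$ --- you are right that a finite arithmetic verification is needed (e.g.\ $32(s_1-4)^2=s_1^2+s_2-6s_1+12$ together with $s_2\leq s_1^2$ forces $s_1=5$, $n=1$); the paper asserts this equivalence without detail, so your flagging it is fair rather than a defect relative to the original.
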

\begin{proof}
It can be checked that
$$
\frac{744}{229}(-k_0,-b_0,1)+\frac{515}{229}(k_1,b_1,-1)=(-86,0,1).
$$

By Theorem \ref{main application}, $(-86,0,1)\in C$, hence we have
$$
c_3-86c_1^3\geq 0,
$$
with equality holds iff
$$
(\frac{c_1^3}{c_1c_2},\frac{c_3}{c_1c_2})=L_{p_1p_{\infty}}\cap L_{p_1p_2}=Q(1;5),
$$
which is equivalent to that $X$ is isomorphic to a degree $6$ hypersurface in $\P^4$.

Similarly,
$$
\frac{93}{422}(-k_0,-b_0,1)+\frac{515}{422}(k_\infty,b_{\infty},-1)=(\frac{1}{6},0,-1).
$$

By Theorem \ref{main application}, we have
$$
\frac{1}{6}c_1^3-c_3\geq 0,
$$
with equality holds iff
$$
(\frac{c_1^3}{c_1c_2},\frac{c_3}{c_1c_2})=L_{p_1p_{\infty}}\cap L_{\infty}=p_{\infty}.
$$

Since $p_{\infty}$ is not in $Q$, the inequality $\frac{1}{6}c_1^3-c_3\geq 0$ is in fact strict.

\end{proof}

\begin{remark}
In \cite{Lu-Tan-Zuo}, the authors prove that for a smooth projective threefold $X$ admitting a smooth fibration of minimal surfaces of general type over a curve, it holds that $$c_3(X)\geq \frac{1}{18}c_1^3(X).$$ According to Corollary \ref{cor}, this inequality can never be satisfied for a SCI threefold with ample canonical bundle. As pointed out by Professor Kang Zuo, this is actually explained by the Lefschetz hyperplane theorem. Indeed, the hyperbolicity of the moduli space of minimal surfaces of general type means that the base curve of a smooth fibration is a hyperbolic curve and hence the fundamental group of the total space $X$ is nontrivial, in contrast with the triviality of the fundamental group of a SCI implied by the Lefschetz hyperplane theorem.
\end{remark}

\section{Proof of the Theorem}

Suppose $n\in \N$ is a positive integer, and $d_1,\cdots, d_n \in \R_{\geq 0}$ are nonnegative real numbers. Let $s_j=\sum_{i=1}^{n}d_i^j$, $j\geq 1$. We define
\begin{equation}\notag
\begin{split}
c_1(n;d_1,\cdots, d_n)&:=4-s_1,\\
c_2(n;d_1,\cdots, d_n)&:=\frac{s_1^2+s_2}{2}-3(s_1-2),\\
c_3(n;d_1,\cdots, d_n)&:=-\frac{s_1^3+3s_1s_2+2s_3}{6}+(s_1^2+s_2)-3s_1+4.
\end{split}
\end{equation}
If $d_1=\cdots=d_n=d$, we denote $c_i(n;d_1,\cdots,d_n)$ by $c_i(n;d)$, $i=1,2,3.$

The following result is standard.
\begin{lemma}\label{expression of chern numbers}
Let $X$ be a SCI threefold in $\mathbb{P}^{n+3}$. If $X$ is a complete intersection of hypersurfaces with degrees $d_1+1,\cdots,d_n+1$, and $d_i\geq 1$, $\forall \ 1\leq i\leq n$, then the Chern classes of $X$ are: $c_i(X)=c_i(n;d_1,\cdots,d_n)$, $i=1,2,3$.

\end{lemma}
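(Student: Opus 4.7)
The plan is to compute the total Chern class of $T_X$ from the normal bundle short exact sequence together with the Euler sequence on projective space, then extract $c_1, c_2, c_3$ via Newton's identities. First I would invoke
\begin{equation}\notag
0 \longrightarrow T_X \longrightarrow T_{\P^{n+3}}|_X \longrightarrow N_{X/\P^{n+3}} \longrightarrow 0.
\end{equation}
The Euler sequence gives $c(T_{\P^{n+3}}) = (1+H)^{n+4}$, and smoothness of the complete intersection forces $N_{X/\P^{n+3}} = \bigoplus_{i=1}^n \sO_X(d_i+1)$. Writing $h := H|_X$, multiplicativity of the total Chern class in the above sequence yields
\begin{equation}\notag
c(T_X) = \frac{(1+h)^{n+4}}{\prod_{i=1}^n \bigl(1+(d_i+1)h\bigr)}
\end{equation}
in the Chow ring of $X$.

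Next I would pass to power sums, which are additive in short exact sequences. Letting $p_k$ denote the $k$-th power sum of the Chern roots of $T_X$ (viewed as the scalar coefficient, so that the corresponding cohomology class is $p_k\, h^k$), one gets
\begin{equation}\notag
p_k = (n+4) - \sum_{i=1}^n (d_i+1)^k.
\end{equation}
Expanding the binomials and using $s_j = \sum_{i=1}^n d_i^j$ produces the compact formulas
\begin{equation}\notag
p_1 = 4 - s_1, \qquad p_2 = 4 - 2s_1 - s_2, \qquad p_3 = 4 - 3s_1 - 3s_2 - s_3.
\end{equation}
Newton's identities
\begin{equation}\notag
c_1 = p_1, \qquad c_2 = \tfrac{1}{2}(p_1^2 - p_2), \qquad c_3 = \tfrac{1}{6}(p_1^3 - 3 p_1 p_2 + 2 p_3)
\end{equation}
then recover the claimed polynomials $c_i(n; d_1, \ldots, d_n)$ after a brief simplification.

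The only obstacle is mechanical bookkeeping in the binomial expansions and the final polynomial identities; no conceptual difficulty is hidden, which is consistent with the author's remark that the lemma is standard. I would present at most the simplification of $c_3$ as a worked check, since $c_1$ is immediate and $c_2$ follows from a two-line computation.
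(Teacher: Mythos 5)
Your proposal is correct and is exactly the standard argument the paper is alluding to when it states the lemma without proof: the normal bundle sequence plus the Euler sequence give the total Chern class, power sums are additive, and Newton's identities convert $p_1=4-s_1$, $p_2=4-2s_1-s_2$, $p_3=4-3s_1-3s_2-s_3$ into the stated polynomials $c_i(n;d_1,\cdots,d_n)$ (I checked the $c_3$ simplification and it matches). Since the paper omits the proof entirely, there is nothing to contrast; your write-up simply supplies the routine computation.
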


We divide the proof of Theorem \ref{main thm} into three steps, corresponding to three sections.

Step 1:  In section \ref{section:p1pinfty}, we firstly prove the $x$-coordinate of any point in $Q$ is between the $x$-coordinates of $p_1$ and $p_{\infty}$. Then we prove any point of  $Q$ is below the line $L_{p_1p_{\infty}}$.

Step 2: In section \ref{section:pm}, we prove that any point of  $Q$ is above the line $L_{p_mp_{m+1}}$, $\forall \ m\geq 6$.

Step 3: In section \ref{section:p1p6}, we prove that $\forall \ i=1,\cdots, 5$, if a point $ Q(n;d_1,\cdots, d_n)\in Q$ has $x$-coordinate less than or equal to the $x$-coordinate of $p_6$, \ then  $Q(n;d_1,\cdots, d_n)$ lies above the line segment $p_ip_{i+1}$.

After the three steps above, it is obviously we have finished the proof of Theorem \ref{main thm}.

Ideas of the proof in each step:

In steps 1 and 2, the idea of the proof is the following:

Given a line $L: y=kx+b$ in $\R^2$,  to prove $Q$ is below $L$ is equivalent to verify $\forall \ n,d_i\in \N, \sum_{i=1}^n d_i\geq 5,$
$$
c_3(n;d_1,\cdots,d_n)-kc_1^3(n;d_1,\cdots,d_n)-bc_1(n;d_1,\cdots,d_n)c_2(n;d_1,\cdots,d_n)\geq 0,
$$

and by the following Lemma \ref{reduce to equal case}, it suffices to verify $\forall \ n\in \N, d\in \R, d\geq 1,  nd\in \N, nd\geq 5$,
$$
c_3(n;d)-kc_1^3(n;d)-bc_1(n;d)c_2(n;d)\geq 0.
$$

Similarly, in order to prove $Q$ is above a line $y=kx+b$, it suffices to verify $\forall \ n\in \N, d\in \R, d\geq 1,  nd\in \N, nd\geq 5$,
$$
kc_1^3(n;d)+bc_1(n;d)c_2(n;d)-c_3(n;d)\geq 0.
$$

\begin{lemma}\label{reduce to equal case}
Let $\lambda,\mu,\nu\in \R$ be constants.  $\forall \ m\in \N$, we have
\begin{equation}\notag
\begin{split}
inf\{&\lambda c_1^3(n;d_1,\cdots,d_n)+\mu c_1(n;d_1,\cdots,d_n)c_2(n;d_1,\cdots,d_n)+ \nu c_3(n;d_1,\cdots,d_n)\mid \\
 &n\in \N, d_i\in \R, \sum_{i=1}^{n}d_i= m,  d_i\geq 1, \forall \ i=1,\cdots, n.\} \geq \\
inf\{& \lambda c_1^3(n;d)+\mu c_1(n;d)c_2(n;d)+ \nu c_3(n;d) \mid  n\in \N, d\in \R, nd=m, d\geq 1.  \}
\end{split}
\end{equation}
\end{lemma}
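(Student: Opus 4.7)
Because $c_1 = 4 - s_1 = 4 - m$ is fixed on the class of tuples considered, the formulas preceding Lemma \ref{expression of chern numbers} show that the linear form $F := \lambda c_1^3 + \mu c_1 c_2 + \nu c_3$ is an affine function of the power sums $(s_2, s_3) = (\sum d_i^2, \sum d_i^3)$:
$$F = A + B s_2 + C s_3,$$
with constants $A, B, C \in \R$ depending only on $\lambda, \mu, \nu$ and $m$. Equivalently, $F(n; d_1, \ldots, d_n) = A + \sum_{i=1}^n g(d_i)$ with $g(x) := B x^2 + C x^3$. The lemma therefore reduces to
$$\inf\Bigl\{\sum_{i=1}^n g(d_i) : n\in \N,\ d_i\ge 1,\ \sum_i d_i = m\Bigr\}\ \ge\ \inf_{n'\in\{1,\ldots,m\}}\, n'\,g(m/n'),$$
which is what I will prove.

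The plan is to locate a minimizer of $\sum g(d_i)$ on each compact polytope $\Delta_n := \{(d_1, \ldots, d_n) \in \R^n : d_i \ge 1,\ \sum d_i = m\}$ ($n\in\{1,\ldots,m\}$) via Lagrange multipliers. Since the stationarity condition $g'(d_i) = 2 B d_i + 3 C d_i^2 = \xi$ is quadratic in $d_i$, the interior-active values take at most two distinct values; a second-order pair-swap analysis, in which swapping $d_i$ and $d_j$ while preserving $d_i + d_j$ changes $F$ by a sign-definite multiple of $(d_i - d_j)^2 [2B + 3C(d_i + d_j)]$, will then allow me to conclude that a minimizer has one of two shapes: (i) the equal configuration $d_i = m/n$, whose value is $n\,g(m/n)$ and is already of equal-case type; or (ii) a ``ones plus equal'' configuration $(1^k, d^{n-k})$ with $d = (m-k)/(n-k) \ge 1$ and $1 \le k \le n-1$.

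For shape (i) the target inequality is immediate. For shape (ii) the sign of $g''(x) = 2B + 6Cx$ on $[1,m]$ controls the strategy. If $g$ is convex on $[1,m]$, Jensen's inequality applied to $(d_1, \ldots, d_n)$ gives $\sum g(d_i) \ge n\,g(m/n)$, so the target holds with $n' = n$. If $g$ is concave on $[0,\infty)$ (equivalently $B \le 0$ and $C \le 0$), then $g(0) = 0$ combined with concavity makes $g$ subadditive on $[0,\infty)$; iterating subadditivity,
$$g(m-n+1) + (n-1)\,g(1)\ \ge\ g\bigl((m-n+1) + (n-1)\bigr)\ =\ g(m),$$
so the target holds with $n' = 1$ for the vertex configuration $(m-n+1, 1^{n-1})$, which is the minimizer on $\Delta_n$ in the concave case.

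In the remaining \emph{mixed} case, where $g''$ changes sign on $[1, m]$, the required inequality
$$k\, g(1) + (n-k)\, g\!\left(\tfrac{m-k}{n-k}\right)\ \ge\ n'\, g(m/n')$$
must be verified for an appropriately chosen $n' \in \{n-k,\, m\}$ by a direct polynomial sign analysis in the variables $B, C, k, n, m$. This mixed case is the main technical obstacle: neither Jensen's inequality nor subadditivity applies globally on $[1, m]$, so the required comparison must be carried out via a careful case distinction on the signs of $B + 3C$ and $B + 3Cm$ (which govern the location of the inflection $x = -B/(3C)$ of $g$). I expect this computation to be elementary but tedious, and to produce the inequality by splitting the analysis according to whether the candidate minimizer lies in the convex portion or the concave portion of $g$'s graph.
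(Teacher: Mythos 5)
Your reduction to a separable minimization is exactly the paper's starting point: since $s_1=m$ is fixed, the objective is $A+Bs_2+Cs_3=A+\sum_i g(d_i)$ with $g(x)=Bx^2+Cx^3$. But from there you diverge in a way that creates a genuine gap. You minimize over the polytope $\{d_i\ge 1,\ \sum d_i=m\}$, so your boundary-touching minimizers have the shape $(1^k,d^{\,n-k})$, which is \emph{not} an equal-case configuration; you are then forced to compare $k\,g(1)+(n-k)\,g\bigl(\tfrac{m-k}{n-k}\bigr)$ against $n'g(m/n')$ directly. You handle the globally convex and globally concave cases, but you explicitly leave the mixed case (where $g''$ changes sign on $[1,m]$) as an unverified ``elementary but tedious'' computation, and you do not justify the claim that some $n'\in\{n-k,\,m\}$ always works. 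As written, the proof is incomplete precisely in the case that cannot be dispatched by Jensen or subadditivity.

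The paper avoids this entirely with one trick you missed: relax the constraint to $d_i\ge 0$ before minimizing. The infimum can only go down under relaxation, and the paper's accompanying proposition asserts that on $\{d_i\ge 0,\ \sum d_i=m\}$ the minimum of $\lambda s_2+\mu s_3$ is attained at a configuration $(0^k,d^{\,n-k})$ with $d=m/(n-k)$. Since $0^j=0$, the zeros contribute nothing to $s_2,s_3$, so this configuration has \emph{exactly} the same Chern numbers as the equal configuration $c_i(n-k;d)$ with $d=m/(n-k)\ge m/n\ge 1$; it already lies in the feasible set of the right-hand infimum, and no comparison between distinct values of $n'$ is ever needed. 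I would recommend replacing your boundary value $1$ by $0$ via this relaxation: your own pair-swap/vertex analysis then lands you directly on equal-case points and the mixed case disappears.
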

\begin{proof}
This lemma  is a direct consequence  of the following elementary proposition.
\end{proof}

\begin{proposition}
Let $d_1\leq d_2\leq \cdots\leq d_n$ be nonnegative real numbers, $s_j=\sum_{i=1}^{n}d_i^j$, $j=1,2,3$,
 and $\lambda, \mu\in \mathbb{R}$ be constants. For fixed $n$ and $s_1$, there exists a natural number $k\leq n$, such that the function $\lambda s_2+ \mu s_3$ attains its minimal value when  $d_1=\cdots =d_k=0$, and $d_{k+1}=\cdots =d_n= \frac{s_1}{n-k}$.
\end{proposition}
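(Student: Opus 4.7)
The plan is to treat this as a constrained optimization problem and minimize $F(d_1,\ldots,d_n) := \lambda s_2 + \mu s_3 = \sum_{i=1}^n\bigl(\lambda d_i^2 + \mu d_i^3\bigr)$ over the compact simplex $\Delta := \{(d_1,\ldots,d_n) \in \R^n : d_i \geq 0,\ \sum_i d_i = s_1\}$. Continuity and compactness guarantee a minimizer; since $F$ is symmetric in its arguments, any minimizer may be sorted to satisfy $d_1 \leq \cdots \leq d_n$, so it will suffice to exhibit a minimizer of the required form.

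The key technical step will be a pairwise swap computation. Fix any two indices $i \neq j$ and any feasible point, set $u := d_i + d_j$, and consider the one-parameter family obtained by replacing $(d_i, d_j)$ with $(t, u-t)$ for $t \in [0, u]$ while holding the remaining coordinates fixed. Writing $\phi(t)$ for the resulting value of $F$, a short calculation yields
\[
\phi'(t) = (2t - u)\bigl(2\lambda + 3\mu u\bigr),
\]
so $\phi$ is strictly convex, strictly concave, or constant on $[0, u]$ according as $2\lambda + 3\mu u$ is positive, negative, or zero.

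I will then choose, among all minimizers of $F$ on $\Delta$, one $(d_1^*,\ldots,d_n^*)$ whose number of zero coordinates is maximal, and call that number $k$; the claim is that the remaining $n-k$ coordinates all coincide, in which case their common value must be $s_1/(n-k)$ by the sum constraint. Suppose toward contradiction that two positive entries differ, say $d_i^* < d_j^*$, and put $u := d_i^* + d_j^* > 0$. The trichotomy above yields a contradiction in each branch: if $2\lambda + 3\mu u > 0$, strict convexity of $\phi$ gives $\phi(u/2) < \phi(d_i^*)$, so replacing $(d_i^*, d_j^*)$ by $(u/2, u/2)$ strictly decreases $F$; if $2\lambda + 3\mu u < 0$, strict concavity forces the minimum of $\phi$ on $[0,u]$ to an endpoint, so $\phi(d_i^*)$ exceeds $\min\{\phi(0), \phi(u)\}$ and $F$ again strictly decreases; and if $2\lambda + 3\mu u = 0$, then $\phi$ is constant on $[0,u]$, so $(d_i^*, d_j^*)$ may be replaced by $(0, u)$ to produce another minimizer with strictly more zero entries, contradicting the maximality of $k$. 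The main subtlety is this last degenerate case, which is precisely why I plan to work with a \emph{maximum-zero} minimizer from the outset rather than attempt a naive iterative exchange argument.
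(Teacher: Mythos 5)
Your proof is correct. The swap computation is right: with $u=d_i+d_j$ fixed, $t^2+(u-t)^2$ and $t^3+(u-t)^3$ are both quadratic in $t$, and $\phi'(t)=(2t-u)(2\lambda+3\mu u)$, so $\phi$ is genuinely a quadratic whose convexity is governed by the sign of $2\lambda+3\mu u$; each branch of your trichotomy then yields the stated contradiction, and the device of selecting a minimizer with the maximal number of zero coordinates correctly disposes of the degenerate case $2\lambda+3\mu u=0$, where a naive exchange argument would stall. Note that the paper itself gives \emph{no} proof of this proposition --- it is simply declared ``elementary,'' and the preceding Lemma is said to be a direct consequence of it --- so there is no argument of the authors' to compare yours against; your write-up supplies exactly the justification the paper leaves to the reader. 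Two cosmetic points: since the proposition orders $d_1\leq\cdots\leq d_n$, you should say explicitly (as you implicitly do) that symmetry of $F$ lets you sort the minimizer you construct; and the formula $d_{k+1}=\cdots=d_n=s_1/(n-k)$ tacitly assumes $k<n$, which holds whenever $s_1>0$ --- the only regime in which the proposition is applied.
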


In step 3, we firstly prove that there are only finite points of $Q$ with $x$-coordinates less than or equal to the $x$-coordinate of $p_6$, then we verify case-by-case that if a point of $Q$ has $x$-coordinate less than or equal to the  $x$-coordinate of $p_6$,  it lies above the union of line segments $\cup_{i=1}^{5}p_ip_{i+1}$.

\subsection{}\label{section:p1pinfty}

We first give an estimate of the $x$-coordiantes of points in $Q$.

\begin{lemma}
The $x$-coordinate of any point of $Q$ is between the $x$-coordiantes of $p_1$ and $p_{\infty}$.

\end{lemma}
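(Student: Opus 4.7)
The plan is to convert the geometric claim into an algebraic one by writing
$x=\frac{c_1^3}{c_1c_2}=\frac{c_1^2}{c_2}$ and then working purely in terms of
$s_1=\sum_i d_i$ and $s_2=\sum_i d_i^2$, via the formulas $c_1=4-s_1$ and
$c_2=\tfrac12(s_1^2+s_2)-3s_1+6$ recorded before Lemma \ref{expression of chern numbers}.
The ampleness hypothesis $s_1\geq 5$ immediately gives $c_1\leq -1<0$ and
$c_2\geq s_1^2/2-3s_1+6>0$, so $c_2>0$, $x>0$, and the two-sided bound
$\tfrac{1}{16}\leq x\leq 2$ reduces to the pair of polynomial inequalities
$c_1^2-2c_2\leq 0$ and $16c_1^2-c_2\geq 0$ in $(s_1,s_2)$.

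For the upper bound I would just expand: $2c_2-c_1^2=s_2+2s_1-4$, which for
$s_1\geq 5$ and $s_2\geq 0$ is already $\geq 6>0$. Hence $x<2$ strictly, in
keeping with the fact that $p_\infty\notin Q$.

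For the lower bound the analogous expansion gives
$16c_1^2-c_2=\tfrac{1}{2}\bigl(31s_1^2-250s_1+500-s_2\bigr)$. The main move is to
bound $s_2$ above in terms of $s_1$ alone, and the observation is that the crude
estimate $s_2\leq s_1^2$ (which is $s_1^2-s_2=\sum_{i\neq j}d_id_j\geq 0$ for
$d_i\geq 0$) is already sharp enough: it reduces the problem to the single-variable
inequality $30s_1^2-250s_1+500\geq 0$, i.e., $10(s_1-5)(3s_1-10)\geq 0$, which
holds for $s_1\geq 5$ with equality exactly at $s_1=5$. Tracing the two equality
cases back, $x=\tfrac{1}{16}$ forces both $s_1=5$ \emph{and} $s_2=s_1^2$;
combined with the constraint $d_i\geq 1$, the latter forces $n=1$ and $d_1=5$,
giving the extremal configuration $Q(1;5)=p_1$.

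The hard part is really only choosing the right auxiliary estimate for $s_2$:
once one notices that $s_2\leq s_1^2$ is sharp enough (because $p_1$ corresponds
to $n=1$, where the bound is attained), everything else is a one-line algebraic
check. In particular I do not need to invoke Lemma \ref{reduce to equal case}
here, since the extremal case for the lower bound is concentration of the $d_i$
onto a single variable rather than equal distribution, and so the reduction
provided by that lemma would actually obscure the sharp situation.
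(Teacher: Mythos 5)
Your proof is correct and follows the same route as the paper: both reduce the claim to the two-sided bound on $\frac{2(4-s_1)^2}{s_1^2+s_2-6(s_1-2)}$ and verify it by elementary algebra in $(s_1,s_2)$. The only difference is that the paper dismisses the verification as "easy" while you supply the one genuinely needed auxiliary estimate, $s_2\leq s_1^2$, together with the sharpness analysis at $Q(1;5)$ --- a welcome filling-in of detail rather than a different method.
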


\begin{proof}
Recall the $x$-coordinates of $p_1$ and $p_{\infty}$ are $\frac{1}{16}$ and $2$ respectively. For any point $Q(n;d_1,\cdots,d_n)$ in $Q$, by Lemma \ref{expression of chern numbers}, its $x$-coordinate is
\begin{displaymath}
\frac{c_1(n;d_1,\cdots,d_n)^3}{c_1(n;d_1,\cdots,d_n)c_2(n;d_1,\cdots,d_n)}=
\frac{2(4-s_1)^2}{s_1^2+s_2-6(s_1-2)}
\end{displaymath}
where $s_j=\sum_{i=1}^{n}d_i$, $j=1,2$.

What we want to prove is equivalent to
\begin{displaymath}
\frac{1}{16}\leq \frac{2(4-s_1)^2}{s_1^2+s_2-6(s_1-2)}\leq 2
\end{displaymath}

Since $s_1\geq 5$, the inequalities above can be verified easily.

\end{proof}

Recall the line $L_{p_1p_{\infty}}$ has the expression  $y=k_0x+b_0$, where $k_0=-\frac{242}{93}, b_0=\frac{515}{93}$. According to the argument before Lemma \ref{reduce to equal case}, in order to prove $Q$ is below $L_{p_1p_{\infty}}$, it suffices to prove the following :
\begin{lemma}
$\forall \ n\in \N, d\in \R, nd \in \N, nd\geq 5, d\geq 1,$
$$
f(n,d):=c_3(n;d)-k_0c_1^3(n;d)-b_0c_1(n;d)c_2(n;d)\geq 0.
$$
\end{lemma}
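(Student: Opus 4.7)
The plan is to make the substitution $s := nd$, $t := d$, which puts $f$ in a form amenable to one-variable analysis. In the equal-$d_i$ case, $s_j = n d^j = s\, t^{j-1}$ for $j = 1, 2, 3$, so the Chern expressions become polynomials in $(s, t)$:
\[
c_1 = 4 - s,\qquad c_2 = \tfrac{1}{2}\,s(s + t) - 3 s + 6,\qquad c_3 = -\tfrac{1}{6}\,s(s+t)(s+2t) + s(s+t) - 3 s + 4.
\]
The domain $\{n \in \N,\ d \in \R,\ nd \in \N,\ nd \geq 5,\ d \geq 1\}$ is a discrete subset of $\{(s, t) : s \geq 5,\ 1 \leq t \leq s\} \subset \R^2$ (since $t = s/n \leq s$ as $n \geq 1$), and I will prove the stronger statement that $f \geq 0$ on the full continuous domain.

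The crucial structural observation is that $f = c_3 - k_0 c_1^3 - b_0 c_1 c_2$ is a quadratic in $t$ for each fixed $s$, with coefficient of $t^2$ equal to $-s/3$: indeed $c_1$ is independent of $t$, $c_1 c_2$ is linear in $t$, and the only $t^2$ contribution comes from the term $-\tfrac{1}{6}\,s \cdot 2 t^2 = -\tfrac{1}{3}\,s\,t^2$ in $c_3$. Since $-s/3 < 0$ on $s \geq 5$, the map $t \mapsto f(s, t)$ is strictly concave on $\R$, so its minimum over the closed interval $[1, s]$ must be attained at an endpoint. Therefore it suffices to establish the two univariate inequalities $f(s, 1) \geq 0$ and $f(s, s) \geq 0$ for all real $s \geq 5$.

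For the first endpoint, a direct expansion (clearing the denominator $186$) gives
\[
186\, f(s, 1) = 1266\, s^2 - 7186\, s + 7000,
\]
which equals $2720$ at $s = 5$ and has derivative $2532\, s - 7186 > 0$ for $s > 3$; hence $f(s, 1) > 0$ on $s \geq 5$. For the second endpoint (corresponding to $n = 1$, a hypersurface of degree $s+1$ in $\P^4$), an analogous expansion yields
\[
93\, f(s, s) = 5(s - 5)(36\, s^2 + 77\, s - 140).
\]
The quadratic $36\, s^2 + 77\, s - 140$ has positive root $(-77 + \sqrt{26089})/72 \approx 1.17$, so it is strictly positive for $s \geq 5$; combined with $s - 5 \geq 0$ this yields $f(s, s) \geq 0$, with equality precisely when $s = 5$, recovering the sextic hypersurface $Q(1; 5) = p_1$.

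The only real obstacle is the algebraic bookkeeping for these two polynomial identities, but it is routine. What makes the approach work is the substitution $(n, d) \mapsto (s, t)$: although $f$ has degree $3$ in $d$ for fixed $n$, in the $(s, t)$ coordinates it becomes only quadratic in $t$, collapsing the problem to two one-variable polynomial inequalities.
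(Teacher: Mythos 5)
Your proposal is correct and follows essentially the same route as the paper: substitute $s=nd$ so that $f$ becomes a concave quadratic in $d$ on $[1,s]$, reduce to the endpoints $d=1$ and $d=s$, and check the two resulting univariate polynomials (your $633s^2-3593s+3500$ and $5(s-5)(36s^2+77s-140)$ match the paper's expressions exactly). The only cosmetic difference is that you verify positivity for all real $s\geq 5$ while the paper restricts to integers $s_1\geq 5$.
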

\begin{proof}
Let $s_1=nd$, we have
$$
f(n,d)=\tilde{f}(s_1,d)=\frac{1}{93} (3500 - 2625 s_1 - 937 d s_1 - 31 d^2 s_1 + 422 s_1^2 + 211 d s_1^2).
$$
Since $\tilde{f}(s_1,d) $ is a quadratic polynomial of $d$ with negative leading term, and $1\leq d\leq s_1$, we have $\tilde{f}(s_1,d)\geq Min\{\tilde{f}(s_1, 1),\tilde{f}(s_1,s_1)\}$. By computations,

$f(s_1,1)= \frac{1}{93}(3500 - 3593 s_1 + 633 s_1^2)$,

$f(s_1,s_1)= \frac{5}{93}(700 - 525 s_1- 103 s_1^2 + 36 s_1^3)$.

It is elementary to verify the above two polynomials of $s_1$ are nonnegative when $s_1\in \mathbb{N}$ and $s_1\geq 5$. Hence $f(n,d)=\tilde{f}(s_1,d)\geq 0, \forall \ n\in \N, d\in \R, nd \in \N, nd\geq 5, d\geq 1.$

\end{proof}

\subsection{}\label{section:pm}
In this section we prove that $Q$ is above the line  $L_{p_mp_{m+1}}$, $\forall \ m\geq 6$.
Recall the line $L_{p_mp_{m+1}}$ has an expression $y=k_mx+b_m$, where
\begin{equation}\notag
\begin{split}
k_m&=\frac{-28 m + m^2 + 4 m^3 - m^4}{(-4 + m) (-3 + m) (-20 - 5 m + 3 m^2)},\\
b_m&=\frac{-120 + 254 m + 3 m^2 - 50 m^3 +
 9 m^4}{3 (-4 + m) (-3 + m) (-20 - 5 m + 3 m^2)}.
\end{split}
\end{equation}

 According to the argument before Lemma \ref{reduce to equal case}, to prove $Q$ is above the line $L_{p_mp_{m+1}}$,  we need to study the nonnegativity of the function
 $$f(m,n,d):=k_mc_1^3(n;d)+b_mc_1(n;d)c_2(n;d)-c_3(n;d).$$
 We have the following lemma.

\begin{lemma}\label{positivity}
$f(m,n,d)\geq 0$, if one of the following conditions holds:
\begin{enumerate}
  \item $m, n\in \N, d\in \R, d\geq 1, nd \in \N, nd\geq 5, m\geq 10$;
  \item $m,n\in \N, d\in \R, d\geq 1, nd \in \N, nd\geq 11, m=6,7,8,9$.
\end{enumerate}
\end{lemma}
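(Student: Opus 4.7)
The plan is to follow the reduction strategy sketched just before Lemma~\ref{reduce to equal case}: setting $s_1=nd$, regard $f(m,n,d)$ as a polynomial $\tilde f(m,s_1,d)$ and prove $\tilde f\ge 0$ on the appropriate region. Using the formulas defining $c_i(n;d)$ together with the identities $s_2=s_1 d$ and $s_3=s_1 d^{2}$ that hold in the equal-degree case, one expands
$$
\tilde f(m,s_1,d)=\tfrac{s_1}{3}\,d^{2}+B(m,s_1)\,d+C(m,s_1),
$$
where $B,C$ are explicit polynomials in $(m,s_1)$ involving the coefficients $k_m,b_m$. Crucially---and in contrast with Section~\ref{section:p1pinfty}---the leading coefficient $s_1/3$ is positive, so $\tilde f(m,s_1,\cdot)$ is strictly convex in $d$. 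Hence the minimum of $\tilde f$ over the interval $d\in[1,s_1]$ (the range forced by $nd=s_1$, $n\in\N$, $d\ge 1$) is attained either at the vertex $d^{\ast}(m,s_1)=-3B(m,s_1)/(2s_1)$, when this lies in $[1,s_1]$, or at one of the endpoints $d=1$ or $d=s_1$.

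The second ingredient is to exploit the fact that the two points $p_m=Q(m;1,\ldots,1)$ and $p_{m+1}=Q(m+1;1,\ldots,1)$ both lie on the line $L_{p_mp_{m+1}}$. This gives $\tilde f(m,m,1)=\tilde f(m,m+1,1)=0$, so that, viewed as a cubic in $s_1$, the boundary value $\tilde f(m,s_1,1)$ factors as
$$
\tilde f(m,s_1,1)=(s_1-m)(s_1-m-1)\,g(m,s_1)
$$
with $g(m,s_1)$ linear in $s_1$ and explicit in $m$. The nonnegativity of $\tilde f(m,s_1,1)$ for integer $s_1\ge 5$ (respectively $s_1\ge 11$) therefore reduces to controlling the unique root of $g(m,\cdot)$, and this is precisely where the dichotomy in the hypotheses originates: for $m\ge 10$ this root is small enough that the weaker assumption $s_1\ge 5$ suffices, while for $6\le m\le 9$ the stronger bound $s_1\ge 11$ is needed. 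The other endpoint value $\tilde f(m,s_1,s_1)$, corresponding to the hypersurface case $n=1$, is also a cubic in $s_1$ whose nonnegativity on $s_1\ge 5$ is checked directly.

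The remaining case is when the vertex $d^{\ast}(m,s_1)$ falls in the open interior $(1,s_1)$. A short computation using the explicit form of $B$ shows that $d^{\ast}\le 1$ exactly when $b_m(s_1-4)\le s_1-2/3$, and since $b_m\to 1^{+}$ as $m\to\infty$ this inequality cuts out a bounded subregion of $(m,s_1)$-space; on its complement the endpoint analysis of the previous paragraph already suffices. On this bounded interior region one must verify the polynomial inequality
$$
\tfrac{4 s_1}{3}\,C(m,s_1)-B(m,s_1)^{2}\ge 0,
$$
which expresses the nonnegativity of the minimum value $\tilde f(m,s_1,d^{\ast})=C(m,s_1)-3B(m,s_1)^{2}/(4 s_1)$. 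This is the principal obstacle: although every individual calculation is elementary and polynomial, sharp control of the interior minimum across the relevant window of $(m,s_1)$ is delicate and is precisely what forces the case split in the statement. I anticipate most of the work of the actual proof to be devoted to this step, with a finite residue of small-$(m,s_1)$ configurations dispatched by direct substitution.
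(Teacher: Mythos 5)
Your reduction is the same as the paper's: set $s_1=nd$, note that $\tilde f(m,s_1,d)=\tfrac{s_1}{3}d^2+B(m,s_1)\,d+C(m,s_1)$ is an upward-opening parabola in $d$, and split according to whether the vertex $d^{\ast}=-3B/(2s_1)$ lies to the left of $d=1$ or in the feasible region; your criterion $d^{\ast}\le 1\iff b_m(s_1-4)\le s_1-\tfrac{2}{3}$ matches the paper's threshold $s_1\le\frac{12b_m-2}{3b_m-3}$, and the factorization $\tilde f(m,s_1,1)=(s_1-m)(s_1-m-1)\cdot(\text{linear in }s_1)$ is a correct observation not in the paper. But there is a genuine gap: the core of the lemma --- the nonnegativity of the interior minimum, i.e.\ $\tfrac{4s_1}{3}C(m,s_1)-B(m,s_1)^2\ge 0$ on the relevant range of $(m,s_1)$ --- is only \emph{identified} as ``the principal obstacle'' and deferred, with no argument given. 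That step is where essentially all the content lies. The paper handles it by clearing denominators to get a cubic polynomial $g(m,s_1)$ in $s_1$ (with polynomial coefficients in $m$) equal to a positive multiple of $\tilde f(m,s_1,d^{\ast})$, and checking that $g$ and its first three $s_1$-derivatives are positive at the left end of the range in question: at $s_1=\frac{12b_m-2}{3b_m-3}$ when $m\ge 10$, and at $s_1=\frac{12b_m-2}{3b_m-3}+1$ when $m=6,\dots,9$ (which suffices because $\frac{12b_m-2}{3b_m-3}<10$ there, so $\frac{12b_m-2}{3b_m-3}+1<11\le s_1$). Since a cubic with positive value and positive derivatives up to order three at a point is positive to the right of it, this closes the argument. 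Without some such verification your proposal does not prove the lemma.

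A second, smaller problem: you attribute the $s_1\ge 5$ versus $s_1\ge 11$ dichotomy to the endpoint value $\tilde f(m,s_1,1)$, claiming that for $6\le m\le 9$ its nonnegativity fails below $s_1=11$. That is not so: $\tilde f(m,s_1,1)\ge 0$ says exactly that $Q(s_1;1,\dots,1)$ lies on or above the line $L_{p_mp_{m+1}}$, and every such point is a corner of the convex hull while every such line supports it from below, so this holds for all integers $s_1\ge 5$ and all $m$ (and can be checked directly without circularity). The dichotomy in the hypotheses comes entirely from the interior-vertex estimate just described: for $m\ge 10$ the vertex value is nonnegative on the whole feasible range, whereas for $m=6,\dots,9$ it is only established for $s_1$ one unit beyond the threshold, which $s_1\ge 11$ guarantees; the leftover configurations $5\le s_1\le 10$ are then dispatched case by case outside the lemma. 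Your closing sentence does (inconsistently with the earlier claim) locate the difficulty in the interior minimum; the earlier attribution should be deleted, and the interior estimate must actually be carried out.
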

\begin{proof}
Let $s_1=nd$. In the new variable $m, s_1, d$, we denote the function $f(m,n,d) $ by $\tilde{f}(m,s_1,d)$, then by the expressions of $k_m,b_m$ and $c_i(n;d)$,
\begin{equation}\notag
\begin{split}
\tilde{f}(m,s_1,d)=&f(m, n, d )\\
=&
k_m(4 - s_1)^3 +
 b_m(4 - s_1)(\frac{s_1^2}{2} -
    3 s_1 +6) +\frac{s_1^3}{6}  - s_1^2  + 3 s_1 -
     4\\
 & + \frac{s_1d^2}{3}  +(\frac{s_1^2}{2}-s_1+ \frac{b_m (4 -s_1 ) s_1}{2} )d.
\end{split}
\end{equation}

Suppose condition $(1)$ holds, since $\tilde{f}(m,s_1,d)$ is a quadratic polynomial of $d$, we have two cases:

Case I:
$$\frac{-(s_1^2/2-s_1+ b_m (4 -s_1 ) s_1/2 )}{2s_1/3}\leq 1.$$

In this case, $\tilde{f}(m,s_1,d)\geq \tilde{f}(m,s_1,1)=f(m,s_1,1)$, and $f(m,s_1,1)\geq 0$ is equivalent to that the point $Q(s_1;1,\cdots,1)$ lies above the line $L_{p_mp_{m+1}}$, which can be easily verified under the condition $s_1\geq 5$.

Case II:
 $$\frac{-(s_1^2/2-s_1+ b_m (4 -s_1 ) s_1/2 )}{2s_1/3}\geq 1.$$

 In this case, $s_1\geq \frac{12b_m-2}{3b_m-3}$, and
\begin{equation}\notag
\tilde{f}(m,s_1,d)\geq \tilde{f}(m,s_1, \frac{-(s_1^2/2-s_1+ b_m (4 -s_1 ) s_1/2 )}{2s_1/3}).
\end{equation}

By  computations, we get
 \begin{equation}\notag
 \begin{split}
g(m,s_1):=& 12 (-4 + m)^2 (-3 + m)^2 (-20 - 5 m + 3 m^2)^2 \cdot \\
&\tilde{f}(m,s_1, \frac{-(s_1^2/2-s_1+ b_m (4 -s_1 ) s_1/2 )}{2s_1/3})
\end{split}
\end{equation}
is a cubic  polynomial of $s_1$ with polynomial coefficients of $m$. We only need to verify the positivity of $g(m,s_1)$.

Again, by  computations, we get that if  $m\in \N, m\geq 10$, then
\begin{equation}\notag
\begin{split}
&g(m,\frac{12b_m-2}{3b_m-3})>0, \ \ \frac{\partial g}{\partial s_1}(m, \frac{12b_m-2}{3b_m-3})>0,\\
&\frac{\partial^2 g}{\partial s_1^2}(m, \frac{12b_m-2}{3b_m-3})>0, \ \ \frac{\partial^3 g}{\partial s_1^3}(m, \frac{12b_m-2}{3b_m-3})>0.
\end{split}
\end{equation}

 Note  $g(m,s_1)$ is a cubic polynomial of $s_1$, the positivity of $g(m,s_1)$ follows from the above computations, and we have verified the conclusion under condition $(1)$.

 Suppose condition $(2)$ holds, by computations, we have
 $$\frac{12b_m-2}{3b_m-3}< 10, \ \forall \ m=6,7,8,9.$$

 This inequality and  condition $(2)$ imply $s_1>\frac{12b_m-2}{3b_m-3} +1>\frac{12b_m-2}{3b_m-3}$. We have
 \begin{equation}\notag
\tilde{f}(m,s_1,d)\geq \tilde{f}(m,s_1, \frac{-(s_1^2/2-s_1+ b_m (4 -s_1 ) s_1/2 )}{2s_1/3}).
\end{equation}

 Again let
 \begin{equation}\notag
 \begin{split}
g(m,s_1):=& 12 (-4 + m)^2 (-3 + m)^2 (-20 - 5 m + 3 m^2)^2 \cdot \\
&\tilde{f}(m,s_1, \frac{-(s_1^2/2-s_1+ b_m (4 -s_1 ) s_1/2 )}{2s_1/3})
\end{split}
\end{equation}
 which is a cubic polynomial of $s_1$. We only need to show the positivity of $g(m,s_1)$. By direct computations,
\begin{equation}\notag
\begin{split}
&g(m,\frac{12b_m-2}{3b_m-3}+1)>0, \ \ \frac{\partial g}{\partial s_1}(m, \frac{12b_m-2}{3b_m-3}+1)>0,\\
&\frac{\partial^2 g}{\partial s_1^2}(m, \frac{12b_m-2}{3b_m-3}+1)>0, \ \ \frac{\partial^3 g}{\partial s_1^3}(m, \frac{12b_m-2}{3b_m-3}+1)>0.
\end{split}
\end{equation}

Since $g(m,s_1)$ is a cubic polynomial of $s_1$, the positivity of $g(m,s_1)$ follows from the above computations. And we have verified the conclusion under condition $(2)$.

\end{proof}

From Lemma \ref{reduce to equal case} and Lemma \ref{positivity}, we get that $Q(n;d_1,\cdots,d_n)\in Q$ lies above the line $L_{p_mp_{m+1}}$, if one of the following conditions holds:
\begin{enumerate}
  \item $m\geq 10$;
  \item $m=6,7,8,9, \sum_{i=1}^{n}d_i\geq 11$.
\end{enumerate}

A case-by-case verification shows that if $5\leq\sum_{i=1}^{n}d_i\leq 10$, then $Q(n;d_1,\cdots,d_n)$ lies above the line  $L_{p_mp_{m+1}}$, $\forall \ m=6,7,8,9$. So we have verified $Q$ lies above the line $L_{p_mp_{m+1}}$, $\forall \ m\geq 6$.

\subsection{}\label{section:p1p6}
In this section, we prove that $\forall \ i=1,\cdots, 5$, if a point $ Q(n;d_1,\cdots, d_n)\in Q$ has $x$-coordinate less than or equal to the $x$-coordinate of $p_6$, then  $Q(n;d_1,\cdots, d_n)$ lies above the line segment $p_ip_{i+1}$.

 Note that the $x$-coordinate of $p_6$ is $\frac{4}{9}$. The following lemma tells us that there are only finite points in $Q$ with $x$-coordinate less than or equal to the $x$-coordinate of $p_6$.

\begin{lemma}
 If $s_1=\sum_{i=1}^{n}d_i\geq 10$, then
\begin{equation}\notag
\frac{c_1(n;d_1,\cdots,d_n)^3}{c_1(n;d_1,\cdots,d_n)c_2(n;d_1,\cdots,d_n)}=\frac{(4-s_1)^2}{(s_1^2+s_2)/2-3(s_1-2)}> \frac{4}{9}.
\end{equation}
\end{lemma}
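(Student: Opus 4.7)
The plan is to reduce the claimed inequality to a one-variable quadratic inequality in $s_1$ by bounding $s_2$ in terms of $s_1$.

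First, I would observe that for $s_1\geq 10$ and $s_2\geq 0$ the denominator $(s_1^2+s_2)/2-3(s_1-2)$ is strictly positive, since already $s_1^2/2-3(s_1-2)\geq 50-24>0$. The numerator $(4-s_1)^2$ does not involve $s_2$, while the denominator is strictly increasing in $s_2$, so the expression
$$\frac{(4-s_1)^2}{(s_1^2+s_2)/2-3(s_1-2)}$$
is a strictly decreasing function of $s_2$. Hence to prove a lower bound on this quantity it suffices to use an upper bound on $s_2$.

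Second, since $d_i\geq 1\geq 0$ one has the elementary estimate
$$s_2=\sum_{i=1}^n d_i^2\leq \Bigl(\sum_{i=1}^n d_i\Bigr)^2=s_1^2,$$
with equality when $n=1$ and $d_1=s_1$. Substituting $s_2=s_1^2$ the denominator becomes $s_1^2-3s_1+6$, so it suffices to show
$$\frac{(4-s_1)^2}{s_1^2-3s_1+6}>\frac{4}{9}$$
for every $s_1\geq 10$. Since the denominator is positive, this is equivalent to $9(s_1-4)^2>4(s_1^2-3s_1+6)$, which simplifies to $s_1^2-12s_1+24>0$. The real roots of this quadratic are $s_1=6\pm 2\sqrt{3}$, and $6+2\sqrt{3}<10$, so the inequality holds for all $s_1\geq 10$.

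There is no genuine obstacle here: the argument is a monotonicity reduction followed by a routine quadratic check, conceptually in the same vein as the reductions in Lemma~\ref{reduce to equal case} but substantially simpler because only $s_1$ and $s_2$ appear. The only point requiring a moment of care is keeping track of the sign of the denominator so that the monotonicity and cross-multiplication go in the correct direction, and this is handled by the a priori bound $s_1\geq 10$.
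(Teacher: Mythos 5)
Your proof is correct and follows essentially the same route as the paper: both bound $s_2\leq s_1^2$ (the paper's line ``$s_1^2\leq s_2$'' is a typo for this) and reduce the claim to the quadratic inequality $s_1^2-12s_1+24>0$, valid for $s_1\geq 10$ since $6+2\sqrt{3}<10$. Your explicit monotonicity-in-$s_2$ remark and positivity check of the denominator are minor tidiness improvements over the paper's chain of implications, not a different argument.
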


\begin{proof}
Since $s_1^2\leq s_2$, we have
\begin{equation}\notag
\begin{split}
&s_1\geq 10 \Rightarrow s_1^2-12s_1+24>0 \Rightarrow 5s_1^2-60s_1+120>0\\
&\Rightarrow 7s_1^2-60s_1+120>2s_1^2\Rightarrow 7s_1^2-60s_1+120>2s_2\\
&\Rightarrow \frac{(4-s_1)^2}{(s_1^2+s_2)/2-3(s_1-2)}> \frac{4}{9}.
\end{split}
\end{equation}
\end{proof}

A case-by-case verification shows that the finite points
$\{Q(n,d_1,\cdots,d_n), 5\leq \sum_{i=1}^{n}d_i\leq 9\}$ lie above the lines $\{L_{p_ip_{i+1}}, 1\leq i\leq 5\}$. By these verifications and the above lemma, we have shown that, once the $x$-coordinate of a point $Q(n;d_1,\cdots, d_n)$ is less than or equal to that of $p_6$, it lies above the line  $L_{p_ip_{i+1}}$, $\forall \ 1\leq i\leq 5$. This completes the proof of Theorem \ref{main thm}.

\section*{Acknowledgments}
The authors would like to express warm thanks to Professor Sheng-Li Tan and Professor Kang Zuo for helpful discussions and comments. We would like also to thank Professor Ulf Persson for his comments, especially sharing with us his conjecture on the geography of the Chern invariants of threefolds. This work is partially supported by Wu Wen-Tsun Key Laboratory of Mathematics, University of Science and Technology of China.

\end{document}